\newcommand{\CX}{{\mathcal {X}}}
\newcommand{\CY}{{\mathcal {Y}}}
\newcommand{\CZ}{{\mathcal {Z}}}
\newcommand{\rk}{{\mathrm{k}}}
\newcommand{\SL}{{\mathrm{SL}}}
\newcommand{\vsp}{{\vspace{0.2in}}}
\newcommand{\con}{\textit{C}}
\newcommand{\oL}{\operatorname{L}}
\newcommand{\oD}{\textit{D}}
\newcommand{\Z}{\mathbb{Z}}
\newcommand{\C}{\mathbb{C}}
\newcommand{\R}{\mathbb R}
\newcommand{\la}{\langle}
\newcommand{\ra}{\rangle}
\newcommand{\be}{\begin {equation}}
\newcommand{\ee}{\end {equation}}
\newcommand{\bee}{\begin {equation*}}
\newcommand{\eee}{\end {equation*}}
\newcommand{\supp}{\operatorname{supp}}
\theoremstyle{Theorem}
\newtheorem{introtheorem}{Theorem}
\theoremstyle{plain}
\newtheorem{thm}{Theorem}[section]
\newtheorem{lem}[thm]{Lemma}
\newtheorem{prop}[thm]{Proposition}
\title[Fourier transform and distributions]{Fourier transform and rigidity of certain distributions}
\author{Binyong Sun}
\address{Academy of Mathematics and Systems Science\\
Chinese Academy of Sciences\\
Beijing, 100190, P.R. China} \email{sun@math.ac.cn}
\author{Chen-Bo Zhu}
\address{Department of Mathematics\\
National University of Singapore\\
Block S17, 10 Lower Kent Ridge Road,
Singapore 119076} \email{matzhucb@nus.edu.sg}
\begin{document}

\subjclass[2000]{42B35, 46F05 (Primary)} \keywords{Fourier
transform, tempered distributions, affine subspaces, rigidity}

\begin{abstract} Let $E$ be a finite dimensional vector space over a local field,
and $F$ be its dual. For a closed subset $X$ of $E$, and $Y$ of $F$,
consider the space $\textit{D}^{-\xi}(E;X,Y)$ of tempered
distributions on $E$ whose support are contained in $X$ and support
of whose Fourier transform are contained in $Y$. We show that
$\textit{D}^{-\xi}(E;X,Y)$ possesses a certain rigidity property,
for $X$, $Y$ which are some finite unions of affine subspaces.
\end{abstract}

\maketitle

\section{Introduction and main result}

One of the most fundamental results in Euclidean harmonic analysis
is the uncertainty principle. As a meta-theorem, it states that a
nonzero function and its Fourier transform cannot both be sharply
localized. There are various concrete formalisations of this
principle, most famously the Heisenberg uncertainty principle. For a
lively and insightful discussion of this topic, see \cite{Tao}.

We shall consider distributions (\cite{Sch}). Fix a finite
dimensional vector space $E$ and its dual $F$. For a closed subset
$X$ of $E$, and $Y$ of $F$, consider the space $\oD^{-\xi}(E;X,Y)$
of tempered distributions on $E$ whose support are contained in $X$
and support of whose Fourier transform are contained in $Y$. The
general thrust of the current note is to examine to what extent one
is able to separate the support and support of the Fourier transform
for distributions in $\oD^{-\xi}(E;\cup \CX,\cup \CY)$, where $\CX $
and $\CY $ are finite sets of affine subspaces. This is the meaning
of rigidity in the title, which the authors consider as another
instance of the uncertainty principle.

One key observation, which surely has been noted by others before
us, is the general importance of relative position of the pair
$(X,Y)$, now assumed to be affine subspaces. As it turns out, there
will be three different circumstances, which we respectively call
thin, perfect, thick (Equation \eqref{def-tpt}). As an indication of
the relevance of these concepts, we have: (a) $\oD^{-\xi}(E;X,Y)=0$
if $(X,Y)$ is a thin pair; (b) $\oD^{-\xi}(E;X,Y)$ can be explicitly
described in terms of a countable linear basis if $(X,Y)$ is a
perfect pair. This is similar to the classical result of L. Schwartz
on the structure of distributions supported on a single point; (c)
If $(X,Y)$ is a thick pair, then $\oD^{-\xi}(E;X,Y)$ contains (in a
non-canonical fashion) the space of tempered distributions on a
nonzero subspace of $E$ and is thus not rigid in any reasonable
sense.

Our main result (Theorem A in this section) is that
$\oD^{-\xi}(E;\cup \CX,\cup \CY)$ possesses the afore-mentioned
rigidity property as long as there is no pair $(X,Y)\in \CX\times
\CY$ which is thick. Very roughly the idea goes as follows: By
applying a good multiplier operator (a suitable function which
vanishes on a part of the support of the Fourier transform), one may
cut off that part of the support in the Fourier transform side. At
the distribution side, the process will generally yield a
distribution with additional support. If no thick pairs are
involved, then this process can be carried out in such a way that
the additional support is very much controlled. We will have more to
say on the detailed strategy later.

The result of this note was motivated by certain
representation-theoretic issues arising from the proof of
archimedean multiplicity-one theorems \cite{SZ}. More specifically
the rigidity statement of Theorem A allows us to establish the
semi-simplicity and non-negativity of an Euler vector field, crucial
in certain descent step involving the so-called distinguished
nilpotent orbits. For applications to representation theory of
algebraic groups over non-archimedean local fields, we will prove
our results over an arbitrary local field, rather than over $\R$ or
$\C$. We note that in Euclidean harmonic analysis and integral geometry,
it is of substantial interest to calculate the Fourier (or Radon) transform of
distributions with support in an algebraic set. We refer the interested reader to the classical book
``Generalized functions" by Gel'fand and Shilov \cite[vol. 1 and 4]{GS}.
We would also like to point out that for a real quadratic space $E$ and $X=Y$ the null cone,
the space $\oD^{-\xi}(E;X,Y)$ is of special interest for a
variety of reasons. Some related topics are explored in \cite[Chapter 4]{HT}.

\vsp We now introduce some necessary notation for this note.

Let $\rk$ be an arbitrary local field, and let $\psi:\rk\rightarrow \C^\times$ be a fixed nontrivial unitary
character. Let $E$ be a finite dimensional $\rk$-vector space.
Denote by
\[
   \con^{\,\varsigma}(E)\subset \con^{-\xi}(E), \quad \oD^{\,\varsigma}(E)\subset \oD^{-\xi}(E)
\]
the (complex) spaces of Schwartz functions, tempered generalized functions, Schwartz densities, and tempered distributions on $E$, respectively.
Thus $\con^{-\xi}(E)$ (resp., $\oD^{-\xi}(E)$) is the dual of $\oD^{\,\varsigma}(E)$ (resp., $\con^{\,\varsigma}(E)$).

Let $F$ be a finite-dimensional $\rk$-vector space which is dual to
$E$, i.e., a non-degenerate bilinear map
\[
  \la \,,\,\ra: E\times F\rightarrow \rk
\]
is given. The Fourier transform
\[
  \begin{array}{rcl}
    \oD^{\,\varsigma}(F)&\rightarrow &\con^{\,\varsigma}(E)\\
        \omega &\mapsto &\hat{\omega}
    \end{array}
\]
is the linear isomorphism given by
\[
   \hat{\omega}(x):=\int_{F} \psi(\la x,y\ra)\,
   \omega(y),\quad x\in E.
\]
Dually for every $D\in \oD^{-\xi}(E)$, its Fourier transform $\widehat
D\in \con^{-\xi}(F)$ is given by
\[
  \widehat{D}(\omega):=D(\hat{\omega}), \quad \omega\in \oD^{\,\varsigma}(F).
\]

For every closed subset $X$ of $E$, and $Y$ of $F$, denote
\[
   \oD^{-\xi}(E;X):=\{D\in \oD^{-\xi}(E)\mid \supp(D)\subset X\},
\]
and
\begin{equation}
\label{def-XY}
   \oD^{-\xi}(E;X,Y):=\{D\in \oD^{-\xi}(E;X)\mid \supp(\widehat D)\subset Y\}.
\end{equation}

Let $X$ be an affine subspace of $E$, and $Y$ an affine subspace of
$F$. Denote
\[
  \oL(X):=\{u-v\mid u,v\in X\}
\]
the subspace associated to $X$, and likewise $\oL(Y)$ for $Y$. We
say that the pair $(X,Y)$ is thick, perfect or thin according as
\begin{equation}
\label{def-tpt}
  \oL(X)^\perp\subsetneq \oL(Y),\quad \oL(X)^\perp=\oL(Y),\quad \textrm{or
  } \oL(X)^\perp\nsubseteq \oL(Y),
\end{equation}
or what is the same,
\[
  \oL(Y)^\perp\subsetneq \oL(X),\quad \oL(Y)^\perp=\oL(X),\quad \textrm{or
  } \oL(Y)^\perp\nsubseteq \oL(X),
\]
respectively.

\vsp

\noindent {\bf Example}: Take $F$ to be a non-degenerate quadratic
space. Suppose that $F_0$ is a non-degenerate nonzero subspace of
$F$ and
\[
  (F_0)^\perp=F^+\oplus F^-
\]
is a decomposition into totally isotropic subspaces $F^+$ and $F^-$. Then the pairs $(F^+, F^+)$, $(F^+\oplus F_0, F^+)$, and $(F^+\oplus F_0, F^+\oplus F_0)$ are thin, perfect and thick, respectively.

\vsp

\noindent {\bf Remark}: If $(X,Y)$ is a perfect pair, then
\[
   \oD^{-\xi}(E;X,Y)=\left\{
                           \begin{array}{ll}
                            \C\, \mu_{X,y_0},\quad&\textrm{if $\rk$ is nonarchimedean},\smallskip\\
                             (\C[X] \,\mu_{X,y_0}) \otimes \oD^{-\xi}(L';\{0\}),\quad&\textrm{if $\rk$ is archimedean.}
                           \end{array}
                       \right.
\]
Here $y_0\in Y$ is an arbitrary element, $\mu_{X,y_0}\in \oD^{-\xi}(X)$ is the product of the function $\psi(\la \cdot, -y_0\ra)$ with a fixed $\oL(X)$-invariant positive Borel measure on $X$. In the archimedean case, $L'$ is an arbitrary subspace of $E$ such that $\oL(X)\oplus L'=E$. Through addition we have a decomposition $E=X\times L'$. The space $\C[X]$ is then the algebra of (complex valued) polynomial functions on $X$, viewed as a real affine space.

\vsp
We now state the main result of this note.

\begin{introtheorem}\label{introtheorem}
Let $\CX$ be a finite set of affine subspaces of $E$, and $\CY$ a finite set of affine subspaces of $F$. Assume that
there is no pair $(X,Y)\in \CX\times \CY$ which is thick. Then
\[
   \oD^{-\xi}(E;\cup \CX,\cup \CY)=\bigoplus_{(X,Y)\in \CX\times \CY\,\, \textrm{that is perfect}} \oD^{-\xi}(E;X,Y).
\]

\end{introtheorem}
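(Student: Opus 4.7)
The plan is to prove simultaneously that the right-hand sum is direct and that it exhausts $\oD^{-\xi}(E;\cup\CX,\cup\CY)$, by induction on $|\CX|+|\CY|$. The base case $|\CX|=|\CY|=1$ follows directly from the remark preceding the theorem: a thin pair gives zero, a perfect pair is the unique summand, and thick is ruled out by hypothesis.

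For the inductive step, the key device is a multiplier operator on the Fourier side. Pick $Y_0\in\CY$ and set $V:=\oL(Y_0)$. I seek a smooth tempered function $\phi$ on $F$, invariant under translation by $V$, that cuts $\hat D$ into a piece $\phi\hat D$ concentrated near $Y_0$ and a complementary piece $(1-\phi)\hat D$ supported in $\bigcup(\CY\setminus\{Y_0\})$. Because $\phi$ is $V$-invariant, $\hat\phi$ is a distribution on $E$ supported in $V^\perp$, and convolution by $\hat\phi$ preserves the distribution-side support contained in any $X\in\CX$ with $(X,Y_0)$ perfect (the case $\oL(X)=V^\perp$). The no-thick-pair hypothesis is used precisely to exclude $V^\perp\subsetneq\oL(X)$, in which the multiplier would enlarge support in a genuinely uncontrollable direction. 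When $(X,Y_0)$ is thin, the support can grow to $X+V^\perp$, but the Fourier support of the resulting piece lies inside $Y_0$ and one exploits the vanishing $\oD^{-\xi}(E;X,Y_0)=0$ to absorb this contribution. Feeding the two pieces back into the inductive hypothesis, applied to strictly smaller configurations of affine subspaces, yields the desired decomposition.

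The directness of the sum is proved by the same mechanism: given $\sum D_{X,Y}=0$, one picks a perfect pair $(X_0,Y_0)$ and constructs a $V$-invariant $\phi$ equal to $1$ near $\pi(Y_0)\in F/V$ and vanishing near the projections $\pi(Y)$ for the other perfect pairs, thereby isolating $D_{X_0,Y_0}$. The principal technical obstacle is constructing the multiplier when some $Y\in\CY$ actually meets $Y_0$: then $\pi(Y)\ni\pi(Y_0)$ in $F/V$, and no single $V$-invariant smooth function separates them. My intended remedy is to iterate the cutoff---first peel off the $Y$'s disjoint from $Y_0$ with one multiplier, reducing to the case where each remaining $Y$ meets $Y_0$; then replace $Y$ by the strictly smaller affine subspace $Y\cap Y_0\subsetneq Y_0$ in the support description and induct on a complexity measure such as $\sum_{Y\in\CY}\dim Y$. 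The delicate point throughout is verifying that each reduction preserves the no-thick-pair hypothesis, so that the multiplier construction remains available at every stage of the induction; this is exactly where the interplay between the three definitions of thin, perfect, and thick becomes crucial.
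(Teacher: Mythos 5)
Your multiplier is the wrong kind, and this is a genuine gap rather than a technicality. You cut on the Fourier side with a smooth $V$-invariant function $\phi$ (where $V=\oL(Y_0)$), so on the $E$-side you convolve $D$ with $\hat\phi$, a distribution supported on the whole subspace $V^\perp$. The best support statement for the resulting piece is then containment in $\bigcup_{X\in\CX}(X+V^\perp)$. For a perfect $X$ this equals $X$, as you note; but for a thin pair $(X,Y_0)$ with $\oL(X)\not\subseteq V^\perp$ the enlarged component satisfies $\oL(Y_0)^\perp=V^\perp\subsetneq \oL(X)+V^\perp$, i.e.\ $(X+V^\perp,\,Y_0)$ is \emph{thick}. (Concretely: $E=\rk^2$ with basis $e_1,e_2$ and dual basis $f_1,f_2$, $X=\span(e_1)$, $Y_0=\span(f_1)$; then $X+V^\perp=E$ and $\oD^{-\xi}(E;E,Y_0)$ is enormous.) So the contribution you propose to ``absorb'' is not supported in $X$, the vanishing $\oD^{-\xi}(E;X,Y_0)=0$ does not apply to it, and there is no canonical way to split the cut-off distribution according to which original $X$ produced which part of its support; your cutoff manufactures exactly the thick configurations the hypothesis was meant to exclude. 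A secondary but also serious defect: when some $Y\in\CY$ meets $Y_0$, the Fourier support of $\phi\hat D$ contains a neighborhood of $Y\cap Y_0$ inside $Y$, which is not a finite union of affine subspaces, so the proposed induction on $\sum_{Y}\dim Y$ leaves the class of objects the theorem is about; and in the nonarchimedean case ``smooth tempered cutoff'' needs separate treatment anyway.

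The paper's proof avoids all of this by multiplying with finite products of the characters $\phi_{u}-c$, $\phi_u=\psi(\la u,\cdot\ra)$, $u\in\oL(Y)^\perp$ (raised to a power $m$ in the archimedean case): on the distribution side this replaces $D$ by a finite linear combination of translates $T_{u_{\mathbf a}}D$, so the support grows only by finitely many translates, and a genericity choice of the vectors $u_Y$ (Lemma~\ref{fam} and its variant inside Proposition~\ref{dec2}, which is precisely where thinness is used, making the bad loci proper subspaces of $\prod_Y\oL(Y)^\perp$) places these translates away from the relevant sets; a local argument near a point $x_1$ (resp.\ $x_0$) then kills the unwanted support. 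The global structure also differs from your single induction on $|\CX|+|\CY|$: one first removes every $X$ that is thin against all of $\CY$ (Proposition~\ref{vectors}), settles the pure case where $\CX$ and $\CY$ consist of translates of single subspaces by a separate argument (a polynomial, hence constant-coefficient differential operator, in the archimedean case; characters attached to vectors in $X_0^\perp\subseteq Y_0$ in the nonarchimedean case, Proposition~\ref{trans}), and then splits off the classes with fixed linear parts (Proposition~\ref{dec2}). If you want to rescue your outline, the smooth cutoff must be replaced by such trigonometric-polynomial multipliers chosen generically, at which point you are essentially reconstructing the paper's argument.
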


\vsp

\noindent {\bf Remarks}: (i) Theorem A asserts in particular that
$\oD^{-\xi}(E;\cup \CX,\cup \CY)=0$, if every pair $(X,Y)\in
\CX\times \CY$ is thin. (ii) In the archimedean case,
$\oD^{-\xi}(E;X,Y)$ is a module for the Weyl algebra of $E$
(consisting of (complex) polynomial coefficient differential
operators on $E$). We note that for a perfect pair $(X,Y)$, the Weyl
algebra module $\oD^{-\xi}(E;X,Y)$ is irreducible.

\vsp

The strategy to prove Theorem A is to control support and it goes as follows.
For every vector $u\in E$, define the following function on $F$:
\begin{equation}
\label{defphi}
\phi _u:=\psi(\la u,\,\cdot\ra).
\end{equation}

Take a distribution $D\in \oD^{-\xi}(E;\cup \CX,\cup \CY)$. For any
$Y\in \CY$, pick a nonzero $u_Y\in \oL(Y)^\perp$. The function
$\phi_{u_Y}$ takes a constant value on $Y$, which we denote by
$c_Y$. Thus $\phi_{u_Y}-c_Y$ will vanish on $Y$, and multiplying a
high power of $\phi_{u_Y}-c_Y$ will cut $Y$ out of the support of
the Fourier transform of $D$. The result is the Fourier transform of
a new distribution which is a linear combination of $D$ and
translates of $D$ by multiples of $u_Y$. Doing this consecutively
for different $Y$'s in $\CY$ will thus yield a distribution which is
a linear combination of $D$ and translates of $D$ by elements of the
lattice in $\oL(Y)^\perp$ generated by $u_Y$'s, and which has a
significantly reduced support for its Fourier transform. If $X \in
\CX$ is thin with respect to some $Y$'s, then one could arrange the
$u_Y$'s (c.f. Lemma \ref{fam}) so that the lattice generated by
$u_Y$'s is in a favorable position relative to $X$, resulting in an
excellent control on the support of the new distribution.

\vsp Here are some words on the organization of this note. In
Section \ref{thin-pairs}, we show that if $X\in \CX$ has the
property that $(X,Y)$ is thin for every $Y\in \CY$, then $X$ in fact
does not appear in the support of any $D\in \oD^{-\xi}(E;\cup
\CX,\cup \CY)$. This is a form of the uncertainty principle. In
Section \ref{pure-affine-pairs}, we show that the rigidity property
as claimed in Theorem A holds in the special case, when $\CX$ (resp. $\CY$) is a finite set of translations of a
subspace $X_0$ of $E$ (resp. a subspace $Y_0$ of $F$). Section
\ref{general-affine-pairs} is devoted to the general case. We prove an inductive step in Proposition \ref{dec2}.
Theorem \ref{introtheorem} will then follow quickly from the results of Sections \ref{thin-pairs} and \ref{pure-affine-pairs}, and the induction result just alluded to.

\vsp 
\noindent {\it Acknowledgements.} The authors thank the referee for helpful comments.  Binyong Sun is supported by NSFC grants 10801126 and 10931006. Chen-Bo Zhu
is supported by MOE2010-T2-2-113.

\section{Thin pairs and elimination of support}
\label{thin-pairs}

Let $\CX$ be a finite set of affine subspaces of $E$, and let
$\CY$ be a finite set of affine subspaces of $F$, as in the Introduction. Assume that both
$\CX$ and $\CY$ are nonempty.

Note that an integer may also be considered as an element of $\rk$. For any family $\mathbf a=\{a_Y\}_{Y\in \CY} \in \Z ^\CY$, denote $[\mathbf a]$ the corresponding element of $\rk^\CY$.

\begin{lem}\label{fam} Assume that $X_1\in \CX$ and that $(X_1,Y)$ is thin for every $Y\in
\CY$. Let $x_1\in X_1\setminus \cup(\CX\setminus\{X_1\})$. Then
there is a family $\{u_Y\in \oL(Y)^\perp\}_{Y\in \CY}$ of vectors in
$E$ with the following property: for every $\mathbf a=\{a_Y\}_{Y\in
\CY}\in \Z ^\CY$ with $[\mathbf a]\neq 0$, we have
\[
 \sum_{Y\in \CY} a_Y u_Y\notin \cup \CX- x_1.
\]
\end{lem}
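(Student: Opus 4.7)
My plan is to prove the lemma by a genericity argument in the finite-dimensional $\rk$-vector space $\CW := \prod_{Y \in \CY} \oL(Y)^\perp$. The conclusion amounts to an avoidance problem: I want a tuple $(u_Y) \in \CW$ outside the bad locus
\[
   \CB \;:=\; \bigcup_{X \in \CX}\ \bigcup_{\substack{\mathbf{a} \in \Z^\CY \\ [\mathbf{a}] \neq 0}} B_{X, \mathbf{a}},
   \quad \text{where} \quad
   B_{X, \mathbf{a}} := \Bigl\{ (u_Y) \in \CW : \sum_{Y} [a_Y]\, u_Y \in X - x_1 \Bigr\}.
\]
Each $B_{X, \mathbf{a}}$ is the preimage of an affine subspace of $E$ under a linear map $\CW \to E$, hence is an affine subspace of $\CW$ (possibly empty).

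First I would verify that each $B_{X, \mathbf{a}}$ is a \emph{proper} affine subspace of $\CW$, treating the two geometric cases separately. If $X \neq X_1$, the affine subspace $X - x_1 \subset E$ does not contain $0$ (since $x_1 \notin X$), so the zero tuple $0 \in \CW$ lies outside $B_{X, \mathbf{a}}$; this forces $B_{X, \mathbf{a}} \neq \CW$. If $X = X_1$, then $X_1 - x_1 = \oL(X_1)$ and I must invoke thinness: pick some $Y_0 \in \CY$ with $[a_{Y_0}] \neq 0$ and, using $\oL(Y_0)^\perp \not\subseteq \oL(X_1)$, choose $v \in \oL(Y_0)^\perp \setminus \oL(X_1)$; the tuple placing $v$ in the $Y_0$-slot and $0$ elsewhere maps under $(u_Y) \mapsto \sum [a_Y] u_Y$ to $[a_{Y_0}]\, v$, which lies outside $\oL(X_1)$ because $\oL(X_1)$ is a $\rk$-subspace and $[a_{Y_0}] \neq 0$ in $\rk$. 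This confirms $B_{X_1, \mathbf{a}} \neq \CW$.

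Next I would close by a covering argument. The union defining $\CB$ is at most countable: $\CX$ is finite, and in characteristic zero $\Z^\CY$ is already countable, while in positive characteristic $p$ the image $[\mathbf{a}]$ depends only on $\mathbf{a}$ modulo $p$, giving even a finite index set. Each $B_{X, \mathbf{a}}$, being a proper affine subspace of $\CW$, is contained in a proper hyperplane and therefore has Haar measure zero in $\CW$ (the local field $\rk$ being locally compact and uncountable). An at most countable union of Haar-null subsets has Haar measure zero and so cannot exhaust $\CW$; any tuple $(u_Y) \in \CW \setminus \CB$ then furnishes the required family.

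The main obstacle, and the only place where the thinness hypothesis really enters, is establishing $B_{X_1, \mathbf{a}} \neq \CW$; once this is in hand, the properness of the other pieces (where the affine subspaces $X - x_1$ miss the origin) and the measure-theoretic covering step are essentially routine.
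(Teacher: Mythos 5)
Your proposal is correct and follows essentially the same route as the paper: you define the same bad affine subspaces (the paper's $S_{X,\mathbf a}$), show each is proper — using $x_1\notin X$ when $X\neq X_1$ and thinness (via a witness $v\in \oL(Y_0)^\perp\setminus \oL(X_1)$) when $X=X_1$ — and conclude by the same countable-union-of-measure-zero argument in $\prod_{Y\in\CY}\oL(Y)^\perp$. Your write-up merely makes explicit a couple of details the paper leaves implicit (the witness vector and the characteristic-$p$ remark about $[\mathbf a]$).
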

\begin{proof}  For every $X\in \CX$ and every $\mathbf a=\{a_Y\}_{Y\in
\CY}\in \Z ^\CY$ with $[\mathbf a]\neq 0$,
put
\[
  S_{X,\mathbf a}:=\left\{\{u_Y\}\in
\prod_{Y\in \CY} \oL(Y)^\perp\mid \sum_{Y\in \CY} a_Y u_Y\in
X-x_1\right\}.
\]
If $X\neq X_1$, then $0\notin S_{X,\mathbf a}$, and $S_{X,\mathbf
a}$ is a proper affine subspace of $\prod_{Y\in \CY} \oL(Y)^\perp$.
If $X=X_1$, then $S_{X,\mathbf a}$ is a subspace of
$\prod_{Y\in \CY} \oL(Y)^\perp$, and is proper due to the hypothesis that $(X_1, Y)$ is thin for every $Y\in
\CY$. In any case each $S_{X,\mathbf a}$ is a
measure zero set of $\prod_{Y\in \CY} \oL(Y)^\perp$, and so is the (countable) union $\cup_{X\in \CX,\,[\mathbf a]\neq
  0} S_{X,\mathbf a}$. We finish the
proof by taking a vector in
\[
  (\prod_{Y\in \CY} \oL(Y)^\perp)\setminus (\cup_{X\in \CX,\,[\mathbf a]\neq
  0} S_{X,\mathbf a}).
\]
\end{proof}

For every vector $u\in E$, denote by
$T_u:\oD^{-\xi}(E)\rightarrow\oD^{-\xi}(E)$  the push forward of the translation by
$u$, and write
\[
  (T_u f)(\omega):=f(T_u \omega), \quad f\in \con^{-\xi}(E), \,\omega\in \oD^{\,\varsigma}(E).
\]
Similar notation applies for $v\in F$.

The following is a form of the uncertainty principle.
\begin{prop}\label{vectors}
Assume that $X_1\in \CX$ and that $(X_1,Y)$ is thin for every $Y\in
\CY$. Then we have
\[
  \oD^{-\xi}(E;\cup \CX,\cup \CY)=\oD^{-\xi}(E;\cup (\CX\setminus \{X_1\}),\cup
 \CY).
\]
Consequently we have
\[
 \oD^{-\xi}(E;\cup \CX,\cup \CY)=\oD^{-\xi}(E;\cup \CX',\cup
 \CY'), \]
where
\[
\begin{aligned}
  \CX':=\{X\in \CX\mid (X,Y)\textrm{ is not thin for some }Y\in \CY\}, \\
  \CY':=\{Y\in \CY\mid (X,Y) \textrm{ is not thin for some }X\in \CX\}.
\end{aligned}
\]
\end{prop}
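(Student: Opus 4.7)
My plan is to prove the first equality by a Fourier-multiplier/translation argument built on Lemma~\ref{fam}, and then to deduce the second by iterating the first together with the symmetric role of $E$ and $F$ under Fourier transform. The first equality reduces to showing that no $x_1 \in X_1 \setminus \cup(\CX\setminus\{X_1\})$ lies in $\supp D$ for $D \in \oD^{-\xi}(E;\cup\CX,\cup\CY)$; if this difference is empty there is nothing to prove.

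Fix such $D$ and $x_1$, and apply Lemma~\ref{fam} to obtain a family $\{u_Y \in \oL(Y)^\perp\}_{Y \in \CY}$ with the stated avoidance property. Set $c_Y := \phi_{u_Y}(y)$ for any $y \in Y$ (well-defined since $u_Y \in \oL(Y)^\perp$, and $c_Y \in \C^\times$), and consider the multiplier
\[
  M \;:=\; \prod_{Y \in \CY} (\phi_{u_Y} - c_Y)^N
\]
on $F$. Each factor vanishes on $Y$, so $M$ vanishes on $\cup\CY \supseteq \supp\widehat D$. For $N$ sufficiently large, $M \cdot \widehat D = 0$: in the non-archimedean case $N = 1$ suffices because a locally constant function vanishing on the support of a distribution annihilates it; in the archimedean case $\widehat D$ is tempered and hence of finite global order $m$, and $M$ vanishes to order at least $N$ along $\cup\CY$, so any $N > m$ works. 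Expanding each factor by the binomial theorem, using $\phi_u \phi_v = \phi_{u+v}$, and applying the standard compatibility between multiplication by $\phi_u$ on the Fourier side and translation on the distribution side, the identity $M\widehat D = 0$ Fourier-inverts (up to the sign convention, which is immaterial) to
\[
  \sum_{\mathbf a \in \{0, \ldots, N\}^{\CY}} c_{\mathbf a}\, T_{-v_{\mathbf a}} D \;=\; 0, \qquad v_{\mathbf a} \,:=\, \sum_{Y \in \CY} a_Y u_Y,
\]
with nonzero multinomial coefficients $c_{\mathbf a} = \prod_Y \binom{N}{a_Y}(-c_Y)^{N - a_Y}$.

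I would then localize near $x_1$: since $\supp T_{-v_{\mathbf a}} D = \supp D - v_{\mathbf a} \subseteq \cup\CX - v_{\mathbf a}$, a term contributes in a neighbourhood of $x_1$ only when $v_{\mathbf a} \in \cup\CX - x_1$, which by Lemma~\ref{fam} forces $[\mathbf a] = 0$. I choose $N$ so that, among $\{0, \ldots, N\}^{\CY}$, the condition $[\mathbf a] = 0$ forces $\mathbf a = 0$---automatic in characteristic zero, and achieved by $N < p$ in characteristic $p$ (compatible with the $N = 1$ from the non-archimedean step, since positive characteristic local fields are non-archimedean). The identity then reduces in a neighbourhood of $x_1$ to $c_{\mathbf 0} D = 0$ with $c_{\mathbf 0} = \prod_Y (-c_Y)^N \neq 0$, so $x_1 \notin \supp D$.

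The second equality follows by iterating. Since thinness of a pair $(X,Y)$ depends only on that pair, one may remove from $\CX$ the $X$'s that are thin against every $Y \in \CY$ one at a time, producing $\cup\CX'$; then the same argument on the Fourier side (with $E$ and $F$, and $\CX$ and $\CY$, interchanged) removes from $\CY$ the $Y$'s thin against every $X \in \CX'$---equivalently, by a brief check, against every $X \in \CX$---producing $\cup\CY'$. I expect the main obstacle to be the archimedean verification $M \widehat D = 0$ near points lying in several $Y$'s simultaneously, where one must estimate the transverse vanishing order of $M$ against the finite global order of $\widehat D$, while respecting the characteristic-$p$ bound $N < p$ on the multiplicity of each factor.
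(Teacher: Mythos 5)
Your proposal is correct and takes essentially the same route as the paper's proof: the multiplier $\prod_{Y\in\CY}(\phi_{u_Y}-c_Y)^N$ built from Lemma~\ref{fam}, translation of the identity back to the $E$-side, and localization near $x_1$ to kill all terms with $[\mathbf a]\neq 0$, with the second equality obtained by iteration and by symmetry under the Fourier transform (and your ``brief check'' that thinness against every $X\in\CX'$ is equivalent to thinness against every $X\in\CX$ does hold). The concern you raise at the end is not actually an obstacle: at a point of $\cup\CY$ lying in several $Y$'s, the single factor attached to any one of them already vanishes there to order $N$, which is all the argument requires.
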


\begin{proof}
Let $x_1\in X_1\setminus
\cup(\CX\setminus\{X_1\})$ and $\{u_Y\in \oL(Y)^\perp\}_{Y\in \CY}$ be as in Lemma
\ref{fam}. Let $\phi_{u_Y}$ be as in \eqref{defphi}, and $c_Y$ be its common value on $Y$, as in the Introduction.

Let $D\in \oD^{-\xi}(E;\cup \CX,\cup \CY)$. Then
\[
  \prod_{Y\in \CY} (\phi_{u_Y}-c_Y)^m \widehat D=0,
\]
where $m=1$ if $\rk$ is nonarchimedean, and $m$ is a sufficiently
large positive integer if $\rk$ is archimedean. The above equality
is equivalent to
\[
  \prod_{Y\in \CY} (T_{u_Y}-c_Y)^m D=0,
\]
or what is the same,
\begin{equation}\label{d0}
\sum_{\mathbf a=\{a_Y\}\in \{0,1,\cdots,m\}^{\CY}} c_{\mathbf a}
T_{u_{\mathbf a}}
  D=0,
\end{equation}
where
\[
  c_{\mathbf a}:=\prod_{Y\in \CY} {{m}\choose{a_Y}}
  (-c_Y)^{m-a_Y},
\]
and
\[
u_{\mathbf a}:=\sum_{Y\in \CY} a_Y u_Y.
\]

The choice of $\{u_Y\}$ ensures that $-u_{\mathbf a}\notin \cup
\CX-x_1$ whenever $[\mathbf a]$ is nonzero. Let $U$ be an open
neighborhood of $0$ in $E$, small enough so that
\[
  (-u_{\mathbf a}+U) \cap
(\cup \CX-x_1)=\emptyset, \quad\textrm{for all $\mathbf a\in \{0,1,\cdots,m\}^{\CY}$ such that $[\mathbf a]\ne 0$.}
\]
Since $D$ is supported in $\cup \CX$, this
implies that
\[
   (T_{u_{\mathbf a}}D)|_{x_1+U}=0, \quad\textrm{for all $\mathbf a\in \{0,1,\cdots,m\}^{\CY}$ such that $[\mathbf a]\ne 0$.}
\]
Together with \eqref{d0}, this implies that $D|_{x_1+U}=0$. Since
$x_1$ is arbitrary, we conclude that $D$ is supported in
$\cup(\CX\setminus \{X_1\})$.
\end{proof}

\section{A special case}
\label{pure-affine-pairs}

\begin{prop}\label{trans}
If $\CX$ is a finite set of translations of a subspace $X_0$ of
$E$, and $\CY$ is a finite set of translations of a subspace $Y_0$
of $F$, then
\begin{equation}\label{d1}
   \oD^{-\xi}(E;\cup \CX,\cup \CY)=\bigoplus_{(X,Y)\in \CX\times \CY} \oD^{-\xi}(E;X,Y).
\end{equation}
\end{prop}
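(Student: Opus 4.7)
The plan is to first note that the direct-sum claim on the right-hand side of \eqref{d1} follows from pure support considerations, and then to produce the decomposition of a given $D\in\oD^{-\xi}(E;\cup\CX,\cup\CY)$ by means of smooth cutoffs applied in turn on $E$ and on $F$. Directness is immediate: distinct elements of $\CX$, being distinct translates of the subspace $X_0$, are disjoint, and dually for $\CY$. A vanishing relation $\sum_{(X,Y)}D_{X,Y}=0$ localized near a single $X\in\CX$ becomes $\sum_Y D_{X,Y}=0$; Fourier transforming and localizing near each $Y\in\CY$ in turn forces every $D_{X,Y}=0$.

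For the decomposition itself, observe that $\oL(X)=X_0$ and $\oL(Y)=Y_0$ do not depend on the chosen $X$ or $Y$, so all pairs in $\CX\times\CY$ share a common type. If every pair is thin, then iterated application of Proposition~\ref{vectors} yields $\oD^{-\xi}(E;\cup\CX,\cup\CY)=0$, matching the right-hand side (every summand $\oD^{-\xi}(E;X,Y)$ being zero as well). In the perfect and thick cases one has simultaneously $X_0^{\perp}\subseteq Y_0$ and $Y_0^{\perp}\subseteq X_0$. Choose smooth $X_0$-invariant functions $\phi_X$ on $E$ (one for each $X\in\CX$), with $\phi_X\equiv 1$ near $X$ and $\phi_X\equiv 0$ near every $X'\neq X$; such $\phi_X$ exist because the elements of $\CX$ descend to distinct points in $E/X_0$. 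Set $D_X:=\phi_X D$, so $\supp D_X\subseteq X$. Since $\phi_X$ is $X_0$-invariant, $\widehat{\phi_X}$ is supported in $X_0^{\perp}$, whence
\[
  \supp\widehat{D_X}\;\subseteq\;X_0^{\perp}+\cup\CY\;=\;\cup\CY,
\]
using $X_0^{\perp}\subseteq Y_0$ and the $Y_0$-invariance of each $Y\in\CY$. Running the same construction on the $F$-side, choose $Y_0$-invariant cutoffs $\tilde\phi_Y$ on $F$ and define $D_{X,Y}$ by $\widehat{D_{X,Y}}:=\tilde\phi_Y\widehat{D_X}$; the dual argument, now using $Y_0^{\perp}\subseteq X_0$, gives $D_{X,Y}\in\oD^{-\xi}(E;X,Y)$. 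Arranging $\sum_X\phi_X\equiv 1$ on $\cup\CX$ and $\sum_Y\tilde\phi_Y\equiv 1$ on $\cup\CY$ during the construction then yields $D=\sum_X D_X=\sum_{X,Y}D_{X,Y}$.

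The main obstacle I anticipate is the technical one of producing cutoffs $\phi_X$ that are simultaneously (i) $X_0$-invariant, (ii) able to separate the distinct $X_0$-cosets in $\CX$, and (iii) genuine multipliers of the space of tempered distributions on $E$ (so that $\phi_X D$ is well-defined and the identity $\widehat{\phi_X D}=\widehat{\phi_X}*\widehat D$ holds). Over $\R$ or $\C$ this is routine using smooth bounded bump functions on $E/X_0$ with bounded derivatives, pulled back to $E$; over a non-archimedean $\rk$ it is easier still, using characteristic functions of appropriate open compact cosets. Everything else reduces to bookkeeping with supports of products and convolutions, made clean by the uniform type of all pairs in the pure affine situation.
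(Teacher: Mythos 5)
Your proposal is correct, but it follows a genuinely different route from the paper's. You split $D$ by multiplying on the $E$-side with $X_0$-invariant cutoffs (pullbacks of bump functions on $E/X_0$) and control $\supp\widehat{\phi_X D}$ via the exchange formula $\widehat{\phi_X D}=\widehat{\phi_X}\ast\widehat{D}$ and a convolution support estimate, using $\supp\widehat{\phi_X}\subseteq X_0^\perp\subseteq Y_0$ and the $Y_0$-invariance of $\cup\CY$; then you repeat dually on the $F$-side using $Y_0^\perp\subseteq X_0$ (and you correctly note that all pairs share one type, so perfect or thick gives both inclusions at once, while the all-thin case is killed by Proposition \ref{vectors}). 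The paper instead splits $D=\sum_X D_X$ by pure localization (the cosets in $\CX$ are disjoint) and shows $\supp\widehat{D_X}\subseteq\cup\CY$ by two different devices: in the archimedean case a real polynomial $P$ with zero locus $\cup\CY$ gives $P^k\widehat{D}=0$, hence $\widetilde{P}^k D=0$ for a constant-coefficient differential operator $\widetilde{P}$, and locality of $\widetilde{P}$ transfers this annihilation to each $D_X$; in the nonarchimedean case multiplication by $\prod_{X\neq X_1}(\phi_{v_X}-\psi(\la X,v_X\ra))$ with $v_X\in X_0^\perp$ isolates $D_{X_1}$, and on the Fourier side these factors act by translations by elements of $X_0^\perp\subseteq Y_0$, which preserve $\cup\CY$. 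Your route buys uniformity -- one mechanism for both local fields and for both factors of the pair -- at the cost of exactly the technical point you flag: in the archimedean case $\widehat{\phi_X}$ is not compactly supported, so you need $\phi_X$ to be a multiplier of tempered distributions, $\widehat{\phi_X}$ to be a convolutor (it is, being a Schwartz-density measure on $X_0^\perp$ when $\phi_X$ is pulled back from a compactly supported smooth function on $E/X_0$), and the estimate $\supp(S\ast T)\subseteq\overline{\supp S+\supp T}$ in that setting, together with the observation that $X_0^\perp+\cup\CY=\cup\CY$ is already closed. These are standard but not free; the paper's devices (locality of differential operators, respectively translations) avoid convolutions of noncompactly supported distributions altogether, which is why its proof is shorter. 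Your directness argument by localizing near each coset and then near each $Y$ on the Fourier side is the same as what the paper implicitly uses.
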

\begin{proof}
Assume that $X_0^\perp \subseteq Y_0$, i.e., $(X_0,Y_0)$ is not a thin pair. Otherwise both sides of
\eqref{d1} are $0$ by Proposition \ref{vectors}, and there is nothing to prove.

Let $D\in  \oD^{-\xi}(E;\cup \CX, \cup \CY)$. For every $X\in \CX$, denote by
$D_X \in \oD^{-\xi}(E;X)$ the distribution which coincides with $D$ on a neighborhood of $X$. Then
\[
  D=\sum_{X\in\CX} D_X.
\]
Note that $\cup \CX$ is a disjoint union, by our assumption on $\CX$.

Assume that $\rk$ is archimedean. Let $P$ be a real polynomial function whose zero locus is $\cup \CY$.  Then  there is a positive integer $k$ such that $P^k \widehat D=0$, or equivalently, $(\widetilde P)^k D=0$, where $\widetilde P$ is a certain constant coefficient differential operator, acting on the space $\oD^{-\xi}(E)$. Therefore for all $X\in \CX$,
$(\widetilde P)^k D_X=0$, which implies that $D_X\in \oD^{-\xi}(E;X,\cup \CY)$. This proves that
\begin{equation}\label{d2}
   \oD^{-\xi}(E;\cup \CX,\cup \CY)=\bigoplus_{X\in \CX} \oD^{-\xi}(E;X,\cup \CY).
\end{equation}

Now assume that $\rk$ is nonarchimedean. Fix $X_1\in \CX$. For any
$X\in \CX\setminus \{X_1\}$, choose a vector $v_X\in X_0^\perp$
such that
\[
 \psi(\la X, v_X\ra)\neq\psi(\la X_1, v_X\ra).
\]
Here $\psi(\la X, v_X\ra)$ stands for $\psi(\la u, v_X\ra)$, which
is independent of $u\in X$, and $\psi(\la X_1, v_X\ra)$ is defined
similarly. Then we have that
\[
  \left( \prod_{X\in \CX\setminus \{X_1\}}(\phi_{v_X}-\psi(\la X,
   v_X\ra))\right)D=\left( \prod_{X\in \CX\setminus \{X_1\}}(\psi(\la X_1, v_X\ra-\psi(\la X,
   v_X\ra))\right)D_{X_1}.
\]
(This is not true when $\rk$ is archimedean.)

The Fourier transform of the left hand side of the above equality
is
\[
   \left( \prod_{X\in \CX\setminus \{X_1\}}(T_{v_X}-\psi(\la X,
   v_X\ra))\right)\widehat D.
\]
Since $v_X\in X_0^\perp \subseteq Y_0$, and since $\cup \CY$ is invariant under translation by elements of $Y_0$, the above generalized function is again supported in $\cup \CY$. Therefore the Fourier transform
of $D_{X_1}$ is also supported in $\cup \CY$. This proves
\eqref{d2} in the nonarchimedean case.

Applying \eqref{d2} to the pair $\CY$ and $\{X\}$, we have that
\begin{equation}\label{d3}
   \oD^{-\xi}(E;X,\cup \CY)=\bigoplus_{Y\in \CY} \oD^{-\xi}(E;X,Y).
\end{equation}
We finish the proof by combining \eqref{d2} and \eqref{d3}.

\end{proof}

\section{The general case}
\label{general-affine-pairs} As before, let $\CX$ be a finite set of
affine subspaces of $E$, and let $\CY$ be a finite set of affine
subspaces of $F$. We start with the following

\begin{prop}\label{dec2}

Let $X_0$ be a subspace of $E$ and $Y_0$ a subspace of
$F$. Write
\[
  \CX_0:=\{X\in \CX\mid \oL(X)=X_0\}\,\textrm{ and }\,\CY_0:=\{Y\in \CY\mid \oL(Y)=Y_0\}.
\]
Assume that $(X',Y_0)$ and $(X_0,Y')$ are thin for all
\[
 X'\in \CX':=\CX\setminus \CX_0\,\textrm{ and all }\,Y'\in \CY':=\CY\setminus
\CY_0. \]
Then
\begin{equation}\label{dxy}
   \oD^{-\xi}(E;\cup \CX, \cup \CY)=
   \oD^{-\xi}(E;\cup \CX_0, \cup \CY_0)\oplus \oD^{-\xi}(E;\cup \CX', \cup
   \CY').
\end{equation}
\end{prop}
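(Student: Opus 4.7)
The plan is to prove \eqref{dxy} in two stages: first that the sum on the right is direct, then that it exhausts $\oD^{-\xi}(E; \cup \CX, \cup \CY)$.

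For directness, an element of $\oD^{-\xi}(E; \cup \CX_0, \cup \CY_0) \cap \oD^{-\xi}(E; \cup \CX', \cup \CY')$ has support in $\cup \CX_0$ and Fourier support in $\cup \CY'$, hence lies in $\oD^{-\xi}(E; \cup \CX_0, \cup \CY')$. Since by hypothesis every pair $(X, Y')$ with $X \in \CX_0, Y' \in \CY'$ is thin, iterated application of Proposition \ref{vectors} will remove each $X \in \CX_0$ from the support and force the element to zero. Parallel applications (on the $E$- and $F$-sides) also yield the two auxiliary identities $\oD^{-\xi}(E; \cup \CX, \cup \CY_0) = \oD^{-\xi}(E; \cup \CX_0, \cup \CY_0)$ and $\oD^{-\xi}(E; \cup \CX', \cup \CY) = \oD^{-\xi}(E; \cup \CX', \cup \CY')$, which I will invoke below.

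For the surjectivity of the sum, given $D \in \oD^{-\xi}(E; \cup \CX, \cup \CY)$, the goal is to construct $D_0 \in \oD^{-\xi}(E; \cup \CX_0, \cup \CY_0)$ with $D - D_0 \in \oD^{-\xi}(E; \cup \CX', \cup \CY')$. For each $Y' \in \CY'$ I would choose $u_{Y'} \in \oL(Y')^\perp$ in generic position, in the spirit of Lemma \ref{fam}, so that for every $\mathbf{a} \in \Z^{\CY'}$ with $[\mathbf{a}] \neq 0$ the vector $u_\mathbf{a} := \sum_{Y'} a_{Y'} u_{Y'}$ avoids a finite list of bad affine subspaces; in particular $u_\mathbf{a} \notin X_0$ and the distinct translates of $\CX_0$ by the various $u_\mathbf{a}$'s remain mutually disjoint. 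Applying the operator $\Pi := \prod_{Y'}(T_{u_{Y'}} - c_{Y'})^m$ to $D$, where $c_{Y'} := \phi_{u_{Y'}}|_{Y'}$ and $m$ is taken large (with $m = 1$ in the nonarchimedean case), the Fourier transform $\prod(\phi_{u_{Y'}} - c_{Y'})^m \widehat D$ is supported in $\cup \CY_0$ because the multiplier vanishes to high order on $\cup \CY'$. Expanding $\Pi D = \sum_{\mathbf{a}} c_\mathbf{a} T_{u_\mathbf{a}} D$ gives an $E$-support in $\bigcup_\mathbf{a}(\cup \CX - u_\mathbf{a})$, which, since the Fourier support is now in $\cup \CY_0$ and the linear directions of translates of $\CX'$ remain thin against $Y_0$, is further refined by another use of Proposition \ref{vectors} to $\bigcup_\mathbf{a}(\cup \CX_0 - u_\mathbf{a})$ — a disjoint union of $X_0$-translates by genericity. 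Proposition \ref{trans} now decomposes $\Pi D = \sum_X (\Pi D)_X$, and I set $D_0 := c_0^{-1} \sum_{X \in \CX_0}(\Pi D)_X$ with $c_0 := \prod_{Y'}(-c_{Y'})^m$, which is nonzero since $|c_{Y'}| = 1$.

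By the auxiliary identity $\oD^{-\xi}(E; \cup \CX', \cup \CY) = \oD^{-\xi}(E; \cup \CX', \cup \CY')$, it remains to show only that $\supp(D - D_0) \subseteq \cup \CX'$, and this will be verified locally. For each $X_1 \in \CX_0$ and each point $x_1$ in an open dense subset of $X_1 \setminus \cup \CX'$ — the good points where genericity forces $x_1 - u_\mathbf{a} \notin \cup \CX$ for all $\mathbf{a} \neq 0$ — a small neighborhood $U$ of $x_1$ receives a contribution from $\Pi D = \sum c_\mathbf{a} T_{u_\mathbf{a}} D$ only from the $\mathbf{a} = 0$ term, yielding $\Pi D|_U = c_0 D|_U$ and hence $D_0|_U = D|_U$ via the uniqueness of the Proposition \ref{trans} decomposition near $X_1$. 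The main obstacle I anticipate is promoting this open-dense local agreement to agreement on all of $\cup \CX_0 \setminus \cup \CX'$: any residual distributional mass of $D - D_0$ would be supported on a finite union of lower-dimensional affine subspaces cut out inside the $X_1$'s by the translates $u_\mathbf{a} + X'$, and I plan to eliminate it through a further invocation of Proposition \ref{vectors}, after checking that the linear directions of these bad subspaces are proper subspaces of $X_0$ and are therefore still thin against $\cup \CY_0$.
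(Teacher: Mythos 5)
Your route is the paper's own: generic $u_{Y'}\in \oL(Y')^\perp$, the operator $\prod_{Y'}(T_{u_{Y'}}-c_{Y'})^m$, refinement of the support of the resulting distribution via Proposition \ref{vectors}, the decomposition via Proposition \ref{trans}, and the local comparison at good points; all of that, as well as the directness argument, is sound. The genuine problem is the justification you propose for the final elimination step. The residual bad subspaces are of the form $B=X_1\cap(u_{\mathbf a}+X')$ with $X_1\in\CX_0$, $X'\in\CX'$, whose linear direction is $X_0\cap \oL(X')$. First, this need not be a proper subspace of $X_0$: the hypotheses allow $X_0\subsetneq \oL(X')$ (only $\oL(X')\neq X_0$ plus the two thinness assumptions are imposed), in which case $B$ can even be all of $X_1$, and your stated genericity ($u_{\mathbf a}\notin X_0$, disjointness of the $X_0$-translates) does not prevent this. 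Second, and more seriously, properness inside $X_0$ would not imply thinness against $Y_0$ anyway: no assumption whatsoever is made on the pair $(X_0,Y_0)$ --- it may be perfect or thick --- and if $Y_0^\perp\subsetneq X_0$ a proper subspace of $X_0$ containing $Y_0^\perp$ is not thin against $Y_0$. The correct and immediate reason, which is what the paper uses, is that $\oL(B)\subseteq \oL(X')$ and $(X',Y_0)$ is thin, hence $Y_0^\perp\nsubseteq \oL(B)$.

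There is a second gap in the same step: to delete the $B$'s from $\supp(D-D_0)$ by Proposition \ref{vectors} you need $(B,Y)$ thin for \emph{every} $Y$ in the collection controlling the Fourier support, and $\widehat{D-D_0}$ is only known to be supported in $\cup\CY$, not in $\cup\CY_0$; so thinness against the $Y'\in\CY'$ must also be checked. This is again easy --- $\oL(B)\subseteq X_0$ and $(X_0,Y')$ is thin by hypothesis --- but it is not covered by your plan as written. With these two corrections your last step becomes exactly the paper's conclusion: $D-D_0$ is supported in $(\cup\CX')\cup\bigl(\cup(\CX_0\wedge\widetilde{\CX'})\bigr)$, every pair formed by an element of $\CX_0\wedge\widetilde{\CX'}$ with an element of $\CY$ is thin, and every pair in $\CX'\times\CY_0$ is thin, so two applications of Proposition \ref{vectors} (one on each side of the Fourier transform) place $D-D_0$ in $\oD^{-\xi}(E;\cup\CX',\cup\CY')$.
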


\begin{proof}
Proposition \ref{vectors} implies that the right hand side of
\eqref{dxy} is a direct sum.

Let $\{u_{Y'}\in \oL(Y')^\perp\}_{Y'\in\CY'}$ be a family of vectors
in $E$ such that for every family $\mathbf a=\{a_{Y'}\}_{Y'\in \CY'}\in \Z^{\CY'}$ with $[\mathbf a]\ne 0$, we have
\[
 \sum_{Y'\in \CY'} a_{Y'} u_{Y'}\notin \bigcup_{X_1,X_2\in
 \CX_0}(X_1-X_2).
\]
The existence of such a family is proved along the same line as that of Lemma
\ref{fam}. Let $\phi_{u_{Y'}}$ be as in \eqref{defphi}, and $c_{Y'}$ be its common value on $Y'$, as in the Introduction.

Let $D\in \oD^{-\xi}(E;\cup \CX, \cup \CY)$. We take $m$ to be a
sufficiently larger positive integer if $\rk$ is archimedean and
$m=1$ if $\rk$ is non-archimedean. Then the generalized function
\[
  \left(\prod_{Y'\in \CY'} (\phi_{u_{Y'}}-c_{Y'})^m\right)\widehat D
\]
is supported in $\cup \CY_0$. It is the Fourier transform of the
distribution
\[
D':=\left(\prod_{Y'\in \CY'} (T_{u_{Y'}}-c_{Y'})^m\right) D.
\]
By expansion, we have
\begin{equation}\label{d4}
D'=\sum_{\mathbf a=\{a_{Y'}\}\in
\{0,1,\cdots,m\}^{\CY'}} c_{\mathbf a} T_{u_{\mathbf a}}
  D,
\end{equation}
where
\[
  c_{\mathbf a}:=\prod_{Y'\in \CY'} {{m}\choose{a_{Y'}}}
  (-c_{Y'})^{m-a_{Y'}},
\]
and
\[
  u_{\mathbf a}:=\sum_{Y'\in \CY'} a_{Y'} u_{Y'}.
\]

For every set $\CZ$ of affine subspaces of $E$, we put
\[
  \widetilde{\CZ}:=\left\{u_{\mathbf a}+Z\mid \mathbf a\in \{0,1,\cdots,m\}^{\CY'},\,Z\in
  \CZ\right\}.
\]
Then $D'$ is clearly supported in $\cup \widetilde \CX$. Its Fourier transform is supported in $\cup \CY_0$, and since
$(X',Y_0)$ is thin for all $X'\in \CX'$, Proposition \ref{vectors} implies that it is supported in $\cup
\widetilde{\CX}_0$.

Now the choice of $\{u_{Y'}\}$ ensures that
\[
  \widetilde{\CX_0}=\CX_0\sqcup \CX_1\quad \textrm{(a disjoint union)},
\]
where
\[
  \CX_1:=\left\{u_{\mathbf a}+X \mid \mathbf a\in \{0,1,\cdots,m\}^{\CY'},\,[\mathbf a]\neq 0,\,X\in
  \CX_0\right\}.
\]
By Proposition \ref{trans}, we may write
\begin{equation}\label{dec}
  D'=D_0+D_1,
\end{equation}
where $D_0\in \oD^{-\xi}(E;\cup \CX_0, \cup \CY_0)$ and $D_1\in
\oD^{-\xi}(E;\cup \CX_1, \cup \CY_0)$.

Let
\[
  x_0\in \cup \CX_0\setminus \cup \widetilde{\CX'}.
\]
The disjointness of $\CX_0$ and $\CX_1$ allows us to choose an open neighborhood $U$ of $0$ in $E$, small enough so
that
\[
  (x_0+U)\cap (\cup\widetilde{\CX'})=\emptyset,
\]
and
\begin{equation}\label{ints1}
  (x_0+U)\cap (\cup\CX_1)=\emptyset.
\end{equation}
For all nonzero $[\mathbf a]$, we thus have
\[
  (x_0+U-u_{\mathbf{a}})\cap (\cup\CX)= \emptyset
  \]
  and therefore
\[
  (T_{u_{\mathbf{a}}}D)|_{x_0+U}=0.
\]
Then \eqref{d4} implies that
\[
  D'|_{x_0+U}=c_{\mathbf 0} D|_{x_0+U}, \quad \textrm{with }c_{\mathbf 0}=\prod_{Y'\in \CY'}  (-c_{Y'})^m,
\]
and \eqref{dec} and \eqref{ints1} implies that
\[
  (D'-D_0)|_{x_0+U}=D_1|_{x_0+U}=0.
\]
We thus conclude from the last two equalities that $D-D_0/c_{\mathbf 0}$
vanishes on $x_0+U$. Since $x_0$ is arbitrary, we see that
 $D-D_0/c_{\mathbf 0}$ is supported in
\[
 (\cup \CX)\setminus (\cup \CX_0\setminus \cup \widetilde{\CX'})\subset\left(\cup \CX'\right)\cup(\cup(\CX_0\wedge \widetilde{\CX'}))
 .
\]
where
\[
 \CX_0\wedge \widetilde{\CX'}=\{X_0\cap X'\mid X_0\in \CX_0,\,X'\in
 \widetilde{\CX'}\}.
\]

Since every pair in $(\CX_0\wedge \widetilde{\CX'})\times \CY$
is thin, and every pair in $\CX'\times \CY_0$ is thin, we see that
 $D-D_0/c_{\mathbf 0}$ actually belongs to
\[
   \oD^{-\xi}(E;(\cup \CX')\cup(\cup(\CX_0\wedge \widetilde{\CX'})), \cup
  \CY)=\oD^{-\xi}(E;\cup \CX', \cup\CY)=\oD^{-\xi}(E;\cup \CX',
  \cup\CY'),
\]
by two applications of Proposition \ref{vectors}. This finishes the
proof of the current proposition.
\end{proof}

We are now ready to prove Theorem \ref{introtheorem}. Assume that
no pair in $\CX\times \CY$ is thick. Put
 \[
   \mathcal L:=\{ \oL(X)\mid (X,Y)\in \CX\times \CY \textrm{ is
   perfect}\}.
 \]
 For every $L\in \mathcal L$, put
 \[
  \CX_L:=\{X\in \CX\mid \oL(X)=L\}\quad \textrm{and}\quad \CY_{L^\perp}:=\{Y\in \CY\mid \oL(Y)=L^\perp\}.
 \]
Then we have
\begin{eqnarray*}
   && \oD^{-\xi}(E;\cup \CX,\cup \CY) \\
   &=&\oD^{-\xi}(E;\cup(\cup_{L\in \mathcal{L}} \CX_L),\cup(\cup_{L\in \mathcal{L}}
 \CY_{L^\perp}) )\qquad \textrm{(by Proposition \ref{vectors})}\\
 &=& \bigoplus_{L\in \mathcal{L}} \oD^{-\xi}(E; \cup \CX_L,\cup\CY_{L^\perp}) \qquad \qquad \qquad \textrm{(by Proposition \ref{dec2})}  \\
  &=& \bigoplus_{L\in \mathcal{L},\,(X,Y)\in  \CX_L\times \CY_{L^\perp}} \oD^{-\xi}(E; X, Y)) \qquad \textrm{(by Proposition \ref{trans})} \\
   &=& \bigoplus_{(X,Y)\in \CX\times \CY\,\, \textrm{that is perfect}}
\oD^{-\xi}(E;X,Y).
\end{eqnarray*}
This finishes the proof of  Theorem \ref{introtheorem}.

\end{document}